\newcommand{\real}{{\mathbb R}}
\newcommand{\closure}{{\rm cl}}
\newcommand{\rme}{{\rm e}}
\newcommand{\rmi}{{\sqrt{-1}}}
\newcommand{\set}[1]{\left\{#1\right\}}
\newcommand{\zahl}{{\mathbb Z}}
\newcommand{\cx}{{\mathbb C}}
\newcommand{\rat}{{\mathbb Q}}
\newtheorem{corollary}{Corollary}
\newtheorem{definition}[corollary]{Definition}
\newtheorem{lemma}[corollary]{Lemma}
\newtheorem{theorem}[corollary]{Theorem}
\begin{document}

\title{Stripes on rectangular tilings}
\author{Akio Hizume \and Yoshikazu Yamagishi}
\maketitle

\begin{abstract}
We consider a class of cut-and-project sets $\Lambda = \Lambda_F
\times \zahl$ in the plane.  Let $L=\Lambda+w\real$, $w\in\real^2$, be
a countable union of parallel lines.  Then either (1) $L$ is a
discrete family of lines, (2) $L$ is a dense subset of $\real^2$, or
(3) each connected component of the closure of $L$ is homeomorphic to
$[0,1] \times \real$.

Keywords: quasicrystal, cut-and-project set, Kronecker's theorem.

MSC 52C23, 11J72
\end{abstract}

\section{Introduction}
\label{intro}

One dimensional quasicrystal structure on the Penrose tiling
(\cite{debruijn}) was first recognized as the musical sequence of
Ammann bars \cite{grunbaum-shephard}.  Pleasants
\cite{pleasants2000,pleasants2003} studied the general theory of
submodels and quotient models of `plain' cut-and-project sets.  One
dimensional structure is also related to the tomography problem
\cite{huck}.

In \cite{jphysa}, we showed the following result.
\par\smallskip\noindent{\bf Theorem} Let $\Lambda_P\subset \cx$ be a
Penrose set.  Let $w\in\cx$, $w\neq0$.  If
$w\real\cap\zahl[\zeta]\neq0$, $\zeta=\rme^{2\pi\rmi/5}$, then the
quotient $(\Lambda_P+w\real)/(w\real)$ is a one-dimensional
cut-and-project set.  If $w\real\cap\zahl[\zeta]=0$, $\Lambda_P + w
\real$ is a dense subset of $\cx$.
\smallskip

In this paper, we consider a simpler family of model sets $\Lambda
\subset \real^2$ of the form $\Lambda = \Lambda_F \times \zahl$,
having a `rotation angle' parameter $\theta$ such that
$\tan\theta\in\real\setminus\rat$.  It is shown that $\Lambda$ has no
nontrivial one dimensional quasicrystal structure.  Instead, we show
the following.
\begin{theorem}
\label{1}
Let $w=(1,s)$, $s\neq0$. 
If $s \in \rat \cos\theta + \rat \sin\theta$, then either
\begin{enumerate}
\item $\Lambda+w\real$ is a dense subset of $\real^2$, or 
\item each connected component of the closure of
$\Lambda+w\real$ is homeomorphic to $[0,1] \times \real$.
\end{enumerate}
If $s\not\in \rat \cos\theta + \rat \sin\theta$, then 
$\Lambda+w\real$ is a dense subset of $\real^2$.
\end{theorem}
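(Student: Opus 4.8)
The plan is to reduce everything to a one-dimensional question about the set of \emph{offsets} of the parallel lines, and then to analyze that set with Pontryagin duality (Kronecker's theorem). Write $\alpha=\cos\theta$, $\beta=\sin\theta$, so that $\beta/\alpha=\tan\theta\notin\rat$ forces $\alpha,\beta$ to be $\rat$-linearly independent. A point of $\Lambda=\Lambda_F\times\zahl$ has the form $(m\alpha+n\beta,\,k)$ with $(m,n)$ in the window and $k\in\zahl$; along any line in direction $w=(1,s)$ the quantity $y-sx$ is constant, so the relevant invariant of such a point is its offset
\[
c(k,m,n)=k-s(m\alpha+n\beta).
\]
First I would show, by a short continuity argument, that $\closure(\Lambda+w\real)$ is exactly the union of the lines $\set{(x,y):y-sx=c}$ over $c\in\closure C$, where $C=\set{c(k,m,n)}$. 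Under this dictionary, $\Lambda+w\real$ is dense iff $\closure C=\real$; its closure is a disjoint union of strips $\cong[0,1]\times\real$ iff $\closure C$ is a disjoint union of nondegenerate closed intervals; and it is a discrete family of lines iff $C$ is discrete. Since replacing $k$ by $k+1$ fixes the window condition and shifts $c$ by $1$, the set $C$ is $\zahl$-periodic, so it suffices to study its image in the torus $\mathbb T=\real/\zahl$.

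Next I would linearize. As $(m,n)$ ranges over $\zahl^2$, the pair $(u,v)=(-s(m\alpha+n\beta),\,-m\beta+n\alpha)$ ranges over a lattice $M=A\zahl^2\subset\real^2$ whose matrix $A$ has determinant $-s\neq0$, so $M$ is a full lattice. Writing $q\colon\real^2\to\mathbb T\times\real$ for reduction of the first coordinate mod $1$, and letting $I$ be the bounded, nonempty-interior internal window, one has
\[
C\bmod 1=\mathrm{proj}_{\mathbb T}\bigl(q(M)\cap(\mathbb T\times I)\bigr).
\]
The key object is therefore the closed subgroup $H=\closure(q(M))\subseteq\mathbb T\times\real$, which I would determine through its annihilator in $\widehat{\mathbb T\times\real}=\zahl\times\real$: a character $(j,\lambda)$ kills $q(M)$ iff $-js\alpha-\lambda\beta\in\zahl$ and $-js\beta+\lambda\alpha\in\zahl$ hold simultaneously.

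Solving these two congruences is the arithmetic heart of the argument. Eliminating $\lambda$ (multiply the two relations by $\alpha,\beta$ and add, using $\alpha^2+\beta^2=1$) collapses them to the single relation $-js=a\alpha+b\beta$ with $a,b\in\zahl$; by $\rat$-independence of $\alpha,\beta$ this has a solution with $j\neq0$ precisely when $s\in\rat\alpha+\rat\beta=\rat\cos\theta+\rat\sin\theta$. Hence if $s\notin\rat\cos\theta+\rat\sin\theta$ the annihilator is trivial, $H=\mathbb T\times\real$, and (using that $I$ has interior and that $q(M)$ is dense in $H$) the projection above is all of $\mathbb T$, so $\closure C=\real$ and $\Lambda+w\real$ is dense --- this is the final clause of the theorem. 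If instead $s=p\alpha+q\beta$ with $p,q\in\rat$, the annihilator is the infinite cyclic group generated by $(d,d\mu)$, where $d$ is the least common denominator of $p,q$ and $\mu=p\beta-q\alpha$; consequently $H=\set{(t,x):dt+d\mu x\in\zahl}$.

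It remains to read off the two possibilities from $H$. Intersecting with $\mathbb T\times I$ and projecting to $\mathbb T$ gives the set of $t$ with $dt\in\zahl-d\mu I\pmod1$; since $t\mapsto dt$ is a $d$-to-one surjection of $\mathbb T$, this is either all of $\mathbb T$ (when $d|\mu|\,|I|\ge1$), giving density as in case (1), or a disjoint union of $d$ closed intervals each of length $|\mu|\,|I|>0$ separated by gaps (when $d|\mu|\,|I|<1$), giving the strip picture of case (2); lifting back to $\real$ this describes $\closure C$ and hence the connected components of $\closure(\Lambda+w\real)$. Finally, $\mu=p\beta-q\alpha$ cannot vanish, for $\mu=0$ together with $s\neq0$ would force $\alpha/\beta=\cot\theta\in\rat$; this is why the degenerate discrete-family-of-lines alternative never arises here and only the two stated cases remain. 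I expect the main obstacle to be the two places where I must interchange closure with projection: justifying that $q(M)\cap(\mathbb T\times I)$ really is dense in $H\cap(\mathbb T\times I)$ (so that the window boundary is harmless), and that the resulting interval structure on $\mathbb T$ is genuinely disjoint; both rest on $I$ having nonempty interior and on Kronecker density of $q(M)$ in $H$.
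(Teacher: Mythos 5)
Your proposal is correct, and while its opening reduction coincides with the paper's, the engine of the argument is genuinely different. The offset set $C=\set{k-sx}$ is exactly the paper's intercept set $S=(\Lambda+w\real)\cap(\set{0}\times\real)$ from Theorem~\ref{2}, and your description of $\closure C$ reproduces the paper's formula~(\ref{3}). But the paper proves the two halves of the theorem by two unrelated arguments: for $s=\lambda/d\in\rat\cos\theta+\rat\sin\theta$, an explicit computation of intercepts that rests on the asserted (but not proved) density of $\set{(am+bn\bmod d,\ m\sin\theta+n\cos\theta)}$ in $(\zahl/d\zahl)\times\real$; and for $s\notin\rat\cos\theta+\rat\sin\theta$, a lift to $\real^3$ and the three-dimensional Kronecker theorem applied to the slanted direction $(\cos\theta,s,-\sin\theta)$, bypassing $S$ altogether. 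You handle both cases uniformly: compute the annihilator of $q(M)$ inside $\widehat{\mathbb{T}\times\real}=\zahl\times\real$, conclude by duality that $\closure(q(M))$ is either all of $\mathbb{T}\times\real$ (trivial annihilator, exactly when $s\notin\rat\cos\theta+\rat\sin\theta$) or the subgroup $H$ cut out by the infinite cyclic annihilator generated by $(d,d\mu)$, $\mu=p\beta-q\alpha\neq0$, and then read off density versus strips according to $d|\mu|\epsilon\ge 1$ or $<1$. The two accounts agree quantitatively: writing $s=\lambda/d$ with $a,b,d$ relatively prime, your least common denominator is the paper's $d$ and $d\mu=\pm\lambda^*$, so your threshold is precisely the paper's $\epsilon\ge 1/|\lambda^*|$. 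The paper's route is more elementary and yields the explicit formula (\ref{3}) at once (and admits the dynamical reinterpretation of Section~3); your route proves rather than asserts the key density statement, explains structurally why for $s\neq0$ the discrete-family-of-lines alternative of the abstract cannot occur ($\mu=0$ would force $\cot\theta\in\rat$), and packages the whole trichotomy in a single annihilator computation. The closure-versus-projection interchanges you flag are indeed the only delicate points, and they close exactly as you predict, using density of $q(M)$ in $H$ and the nonempty interior of the window; just note that in the dense case the projection of $q(M)\cap(\mathbb{T}\times I)$ is dense in $\mathbb{T}$, not literally ``all of $\mathbb{T}$'' as written.
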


For the case of a special length of the window interval, we give
another proof by using the dynamics of a suspension map for an
interval exchange map.

\section{Stripes on rectangular tilings}

See \cite{moody} for a general discussion on the cut-and-project sets.
In this paper we adopt a simplified definition.
\begin{definition}
A cut-and-project scheme $\Sigma=(\real^k, \real, D, \Omega,
\Lambda, p, q)$ consists of a physical space $\real^k$, an internal
space $\real$, a lattice $D \subset \real^k \times \real$, a bounded
interval $\Omega \subset \real$ called the `window', a subset
$\Lambda=\Lambda(\Omega)\subset \real^k$, and natural
projections $p: \real^k \times \real \to \real^k$, $q:
\real^k \times \real \to \real$, such that
\begin{enumerate}
\item $p|D:D \to \real^k$ is injective, 
\item $q(D)$ is dense in $\real$, and
\item $\Lambda=\set{p(d) :\ d\in D, q(d)\in\Omega}$.
\end{enumerate}
A subset $\Lambda \subset \real^k$ is called a model set, or
cut-and-project set, if there exists a cut-and-project scheme
$\Sigma=(\real^k, \real, D, \Omega, \Lambda, p, q)$.  A relatively
dense subset $\Lambda \subset \real^k$ is called a Meyer set if it is
a subset of a model set.
\end{definition}

An example is the cut-and-project scheme
\[ \Sigma_F=(\real, \real, D_F, I, \Lambda_F, p, q)
\]
with a bounded interval $I=[0,\epsilon)$ and a lattice
\[ D_F
 = \set{(m \cos\theta - n \sin\theta, m \sin\theta + n \cos\theta)
   :\ m,n \in\zahl}
\]
where $\tan\theta$ is irrational.  The model set $\Lambda_F$ is written by
\[
 \Lambda_F = \set{ (m \cos\theta - n \sin\theta) :\ 
 0 \le m \sin\theta + n \cos\theta < \epsilon,
\ m,n\in\zahl}.
\]
If $\tan\theta=(\sqrt{5}-1)/2$, $\Lambda_F$ is called a Fibonacci set.
For $x=m\cos\theta-n\sin\theta$, $m,n\in\zahl$, we denote by $x^* =
m\sin\theta+n\cos\theta$.

In this paper we consider the cut-and-project scheme
\[ \Sigma = (\real^2, \real, D, I, \Lambda, p, q),
\]
with a bounded interval $I=[0,\epsilon)$ and a lattice
\[ D
 = \set{(m \cos\theta - n \sin\theta, k, m \sin\theta + n \cos\theta)
   :\ m,n,k\in\zahl}
\]
where $\tan\theta$ is irrational.  The model set $\Lambda
\subset \real^2$ is written by $\Lambda = \Lambda_F \times \zahl$.

For $w\in\real^2$, we denote by $\set{0}\times
S = (\Lambda +w\real) \cap (\set{0}\times\real)$.
\begin{theorem}
\label{2}
Let $w=(1,\lambda/d)$, $\lambda=a\cos\theta-b\sin\theta$, where
$a,b,d\in\zahl$ are relatively prime and $d\neq0$.  If $\epsilon \ge
1/|\lambda^*|$, $S$ is a dense subset of $\real$.  If $0 < \epsilon <
1/|\lambda^*|$, the closure of $S$ is written by
\begin{equation}
\label{3}
\closure(S) = \set{\frac{l + t \lambda^*}{d} :\  
 0 \le t \le \epsilon, \ l\in\zahl }.
\end{equation}
\end{theorem}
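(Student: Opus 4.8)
The plan is to reduce the statement to an explicit description of $S$, and then to control $\closure(S)$ by a Kronecker-type density argument. First I would compute $S$ directly. The line of $\Lambda+w\real$ through a point $(x,k)\in\Lambda_F\times\zahl$ in direction $w=(1,s)$ meets $\set{0}\times\real$ at height $k-sx$, so that
\[
S=\set{k-\tfrac{\lambda}{d}\,x :\ x\in\Lambda_F,\ k\in\zahl}.
\]
The key is the algebraic identity $\lambda x+\lambda^* x^*=am+bn$, valid for $x=m\cos\theta-n\sin\theta$; it follows by expanding the product and using $\cos^2\theta+\sin^2\theta=1$, or equivalently from $(\lambda-\rmi\lambda^*)(x+\rmi x^*)=(a-b\rmi)(m+n\rmi)$ after writing $x+\rmi x^*=\rme^{\rmi\theta}(m+n\rmi)$. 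Substituting $\lambda x=(am+bn)-\lambda^* x^*$ gives
\[
S=\set{\frac{(dk-am-bn)+\lambda^* x^*}{d} :\ k\in\zahl,\ x=m\cos\theta-n\sin\theta\in\Lambda_F},
\]
and setting $l=dk-am-bn\in\zahl$, $t=x^*\in[0,\epsilon)$ immediately yields the inclusion $S\subseteq\set{(l+t\lambda^*)/d:\ l\in\zahl,\ 0\le t\le\epsilon}$.

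Next I would verify that, when $0<\epsilon<1/|\lambda^*|$, the right-hand side of \eqref{3} is already closed: it equals $\tfrac1d\bigcup_{l\in\zahl}\,[\,l,\ l+\epsilon\lambda^*\,]$ (interpreting the endpoints in increasing order when $\lambda^*<0$), a union of closed intervals of length $\epsilon|\lambda^*|<1$ separated by uniform gaps $1-\epsilon|\lambda^*|>0$, hence closed. Together with the previous paragraph this gives $\closure(S)\subseteq\set{(l+t\lambda^*)/d}$. For the reverse inclusion I need that every point $(l+t\lambda^*)/d$ lies in $\closure(S)$. Given $l\in\zahl$ and $t\in[0,\epsilon]$, it suffices to produce $(m,n)\in\zahl^2$ with $am+bn\equiv-l\pmod{d}$ and with $x^*=m\sin\theta+n\cos\theta$ arbitrarily close to $t$ and inside $[0,\epsilon)$; then $k=(l+am+bn)/d\in\zahl$, the corresponding element $(l+x^*\lambda^*)/d$ genuinely belongs to $S$, and it approximates the target as $x^*\to t$.

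The \emph{crux}, which I expect to be the main obstacle, is therefore the following density statement, and this is where Kronecker's theorem enters. For each residue $r$ modulo $d$, the set $A_r=\set{m\sin\theta+n\cos\theta:\ am+bn\equiv r\pmod{d}}$ is dense in $\real$. To prove this I would observe that $\set{(m,n):\ am+bn\equiv0\pmod{d}}$ is a finite-index sublattice of $\zahl^2$ containing $(d,0)$ and $(0,d)$, so $A_0\supseteq d(\zahl\sin\theta+\zahl\cos\theta)$; since $\tan\theta$ is irrational, $\zahl\sin\theta+\zahl\cos\theta$ is a non-cyclic subgroup of $\real$ and hence dense, so $A_0$ is dense, and each $A_r$ is a translate of $A_0$ by a single value $m_0\sin\theta+n_0\cos\theta$ and thus dense as well. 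Here the hypothesis $\gcd(a,b,d)=1$ is used to guarantee that the residue $r=-l$ is actually attained, i.e.\ that $\zahl^2\to\zahl/d\zahl$, $(m,n)\mapsto am+bn$, is onto. Combining the two inclusions proves \eqref{3}. Finally, when $\epsilon\ge1/|\lambda^*|$ the intervals $[\,l,\ l+\epsilon\lambda^*\,]$ overlap and cover $\real$, so the identical density argument shows $\closure(S)=\real$, that is, $S$ is dense. (Note $\lambda^*\neq0$ throughout, since $\lambda^*=a\sin\theta+b\cos\theta=0$ with $(a,b)\neq(0,0)$ would force $\tan\theta\in\rat$.)
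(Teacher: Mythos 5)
Your proof is correct and follows essentially the same route as the paper: compute the intersection heights $k-\lambda x/d$, apply the identity $\lambda x+\lambda^*x^*=am+bn$, and invoke density of the residues $(am+bn \bmod d)$ paired with the values $x^*=m\sin\theta+n\cos\theta$. The only difference is one of completeness: the paper dismisses the density of $\set{(am+bn \bmod d,\, x^*)}$ in $(\zahl/d\zahl)\times\real$ with ``it is not difficult to see,'' whereas you prove it (via the sublattice containing $(d,0)$ and $(0,d)$ and the non-cyclic subgroup $\zahl\sin\theta+\zahl\cos\theta$) and also verify the closedness of the right-hand side of \eqref{3}, details the paper leaves implicit.
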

\begin{proof}
Let $x=m\cos\theta-n\sin\theta$.
The line passing through the point $(x,k) \in \Lambda$, with
the slope $\lambda/d$, passes through the point
$(0,k-\lambda x/d)$.
We have
\begin{align*}
k- \frac{\lambda x}{d}
&= k-\frac{a\cos\theta-b\sin\theta}{d} (m\cos\theta-n\sin\theta) \\
&= k-\frac{am+bn}{d}
  + \frac{a\sin\theta+b\cos\theta}{d} (m\sin\theta+n\cos\theta).
\end{align*}
It is not difficult to see that the set
\[ \set{(am+bn \bmod d,
  m\sin\theta+n\cos\theta) :\ m,n\in\zahl}
\]
is a dense subset of
$(\zahl/d\zahl)\times\real$, since $\tan\theta$ is irrational and
$a,b,d$ are relatively prime.  Thus we obtain (\ref{3}).
\end{proof}

The first part of Theorem~\ref{1} immediately follows from
Theorem~\ref{2}.

Here we give a picture.  Let $\theta=\pi/6$, $\epsilon =
\cos\theta+\sin\theta=(1+\sqrt{3})/2$.  Figure 1 draws $112$ lines of
the slope $\cos\theta+\sin\theta$ in the range
$[-2.5,2.5]\times[-2.5,2.5]$.  There appear thick `stripes'.
\begin{figure}
\includegraphics{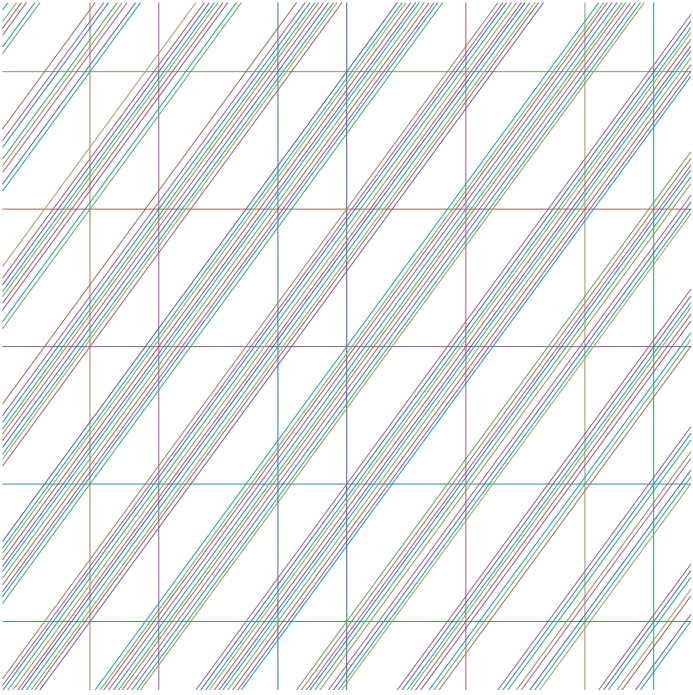}
\caption{Thick stripes.  $\theta=\pi/6$.  The slope is
  $\cos\theta+\sin\theta$.}
\label{fig:1} 
\end{figure}

\section{Suspension dynamics on the rectangular tiling}

In this section we prove (\ref{3}) for the case 
$\epsilon = \cos\theta + \sin\theta$, by using dynamical systems.
Since $\Lambda_F$ is a discrete subset of $\real$,
it is written by a sequence
\[ \Lambda_F = \set{ x_j = m_j \cos\theta - n_j \sin\theta:\ j\in\zahl},
\]
where $x_0=0$, and $x_j < x_{j+1}$, $j\in\zahl$.

\begin{lemma}
If $x_j^* \in [0,\cos\theta)$, 
$x_{j+1}- x_j = \cos\theta$ and 
$x_{j+1}^*- x_j^* = \sin\theta$.
If $x_j^* \in [\cos\theta, \cos\theta+\sin\theta)$,
$x_{j+1}- x_j = \sin\theta$ and 
$x_{j+1}^*- x_j^* = -\cos\theta$.
\end{lemma}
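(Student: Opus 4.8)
The plan is to read $\Lambda_F$ off the geometry of the strip and then identify the successor map directly. Recall that $D_F$ is the image of $\zahl^2$ under rotation by $\theta$, with orthonormal basis $e_1=(\cos\theta,\sin\theta)$ and $e_2=(-\sin\theta,\cos\theta)$; for a lattice point the physical coordinate is its horizontal coordinate and $x^*$ is its vertical coordinate. Thus $\Lambda_F$ is the set of horizontal coordinates of the lattice points lying in the strip $\set{(u,v):0\le v<\epsilon}$, and the distinguished value $\epsilon=\cos\theta+\sin\theta$ is exactly the sum of the vertical heights $\sin\theta$ and $\cos\theta$ of the two vectors $e_1$ and $-e_2=(\sin\theta,-\cos\theta)$. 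Both $e_1$ and $-e_2$ move a point to the right, by $\cos\theta$ and by $\sin\theta$ respectively, so these are the natural candidates for the step to the successor. The whole statement amounts to showing that the successor of $x_j$ is obtained by exactly one of these two steps, the choice being dictated by the position of $x_j^*$ in the window.

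\textbf{Step 1 (the candidate lies in $\Lambda_F$).} This is a direct check on the window condition. If $x_j^*\in[0,\cos\theta)$, then $(x_j+\cos\theta)^*=x_j^*+\sin\theta\in[\sin\theta,\epsilon)\subset[0,\epsilon)$, so $x_j+\cos\theta\in\Lambda_F$ with star increment $+\sin\theta$. If instead $x_j^*\in[\cos\theta,\epsilon)$, then $(x_j+\sin\theta)^*=x_j^*-\cos\theta\in[0,\sin\theta)\subset[0,\epsilon)$, so $x_j+\sin\theta\in\Lambda_F$ with star increment $-\cos\theta$. In either case the asserted increments hold; it remains only to verify that the candidate is actually the \emph{next} point.

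\textbf{Step 2 (no point lies in between).} Suppose, in the first case, that some $x\in\Lambda_F$ satisfies $x_j<x<x_j+\cos\theta$. The difference $v=(x-x_j,\,x^*-x_j^*)$ is a nonzero vector of $D_F$ whose horizontal component lies in $(0,\cos\theta)$ and whose vertical component lies in $(-\cos\theta,\epsilon)$, since $x_j^*\in[0,\cos\theta)$ and $x^*\in[0,\epsilon)$. Inverting the rotation gives integer coordinates $\Delta m=v_x\cos\theta+v_y\sin\theta$ and $\Delta n=-v_x\sin\theta+v_y\cos\theta$; using the crude estimates $1+\sin\theta\cos\theta<2$ and $\epsilon\cos\theta<2$ one sees that these bounds confine $(\Delta m,\Delta n)$ to the box $\set{0,1}\times\set{-1,0,1}$, leaving only finitely many possibilities. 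Discarding those whose horizontal component is not in $(0,\cos\theta)$, at most the two lattice vectors $-e_2$ and $e_1+e_2$ survive, and their vertical components are exactly $-\cos\theta$ and $\epsilon$; both force $x^*\notin[0,\epsilon)$, a contradiction. Hence no intermediate point exists, and $x_j+\cos\theta$ is the successor. The second case, $x_j^*\in[\cos\theta,\epsilon)$, is entirely symmetric, with the roles of $\cos\theta$ and $\sin\theta$ interchanged.

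The main obstacle is Step 2, and the reason it goes through is precisely the special value $\epsilon=\cos\theta+\sin\theta$: it makes the two obstructing lattice vectors land \emph{exactly} on the boundary of the window, which is closed at $0$ and open at $\epsilon$, so they are excluded rather than admitted. The only arithmetic needed is the pair of estimates above bounding $(\Delta m,\Delta n)$ to a finite set, after which the verification is a short finite check that is uniform in $\theta$; the borderline angle $\theta=\pi/4$, at which the two gap lengths $\cos\theta$ and $\sin\theta$ would coincide, is excluded anyway since $\tan\theta$ is irrational.
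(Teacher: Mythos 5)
Your proof is correct: Step 1 rightly verifies that the candidate point satisfies the window condition, and Step 2's finite check (confining the difference vector to the box $\{0,1\}\times\{-1,0,1\}$ and then eliminating the two surviving vectors $-e_2$ and $e_1+e_2$ because their star increments $-\cos\theta$ and $\epsilon$ push $x^*$ out of the half-open window $[0,\epsilon)$) is sound, and your handling of both $\theta<\pi/4$ and $\theta>\pi/4$ is consistent. The paper dismisses this lemma with ``The proof is trivial,'' so your write-up simply supplies the direct verification the authors left to the reader; it is the same (and only natural) approach, just made explicit.
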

\begin{proof}
The proof is trivial.
\end{proof}

Let $w=(1,\lambda/d)$, $\lambda=a\cos\theta-b\sin\theta$, where
$a,b,d\in\zahl$ are relatively prime.
Consider a line
$\ell_\alpha = \set{(t,\alpha+t\lambda/d) :\ t\in\real}$ 
passing through the point $(0,\alpha)$, with the slope $\lambda/d$.
Let
\[ X = [0,\cos\theta) \times [0, \frac{1}{d}] \times [0,\cos\theta]
 \cup [\cos\theta,\cos\theta+\sin\theta)
 \times [0,\frac{1}{d}] \times [0,\sin\theta]
\]
be the phase space of suspension dynamics, where we identify
\[
  (\xi,0,\zeta) \sim (\xi, \frac{1}{d}, \zeta),
\]
\[
  (\xi, \eta,\cos\theta) \sim (\xi+\sin\theta, \eta, 0),
 \quad 
 0\le \xi < \cos\theta,
\]
\[
  (\xi,\eta,\sin\theta) \sim (\xi-\cos\theta, \eta, 0), 
 \quad
  \cos\theta \le \xi < \cos\theta+\sin\theta.
\]
Let $\varphi : \real^2 \to X$ be a `card album' map defined by
\[ \varphi(x,y) = (x_j^*, y \bmod 1, x-x_j), 
 \quad \text{ for } \ x_j \le x < x_{j+1}.
\]
\begin{lemma}
 $\varphi(\Lambda) = \set{(x_j^*,0,0) :\ j\in\zahl}$.
 The mapping $\varphi : \real^2 \to X$ is continuous.
\end{lemma}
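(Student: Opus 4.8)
The plan is to dispose of the set identity first, which is immediate, and then to spend the effort on continuity. A point of $\Lambda=\Lambda_F\times\zahl$ has the form $(x_j,k)$ with $j,k\in\zahl$, and here $x=x_j$ is the left endpoint of its own cell $[x_j,x_{j+1})$, so the index selected in the definition of $\varphi$ is exactly $j$. Consequently $\varphi(x_j,k)=(x_j^*,\,k\bmod1,\,x_j-x_j)=(x_j^*,0,0)$, since $k\in\zahl$ forces $k\bmod1=0$. Letting $j$ run over $\zahl$, the value of $k$ being irrelevant, yields $\varphi(\Lambda)=\set{(x_j^*,0,0):j\in\zahl}$.

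For continuity I would first check that $\varphi$ really takes values in $X$. Because $\Lambda_F$ is built with the window $I=[0,\cos\theta+\sin\theta)$, every $x_j^*$ lies in $[0,\cos\theta+\sin\theta)$, and the Lemma gives $x_{j+1}-x_j=\cos\theta$ when $x_j^*\in[0,\cos\theta)$ and $x_{j+1}-x_j=\sin\theta$ when $x_j^*\in[\cos\theta,\cos\theta+\sin\theta)$; hence $x-x_j$ ranges exactly over the third factor of the corresponding box of $X$, and the triple $(x_j^*,\,y\bmod1,\,x-x_j)$ is admissible. On the interior of each vertical strip $(x_j,x_{j+1})\times(\real\setminus\zahl)$ the map $(x,y)\mapsto(x_j^*,\,y\bmod1,\,x-x_j)$ is visibly continuous into the interior of $X$, so it remains only to treat the two kinds of seam: the horizontal seams $y\in\zahl$ and the vertical seams $x=x_{j+1}$. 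Since $X$ carries the quotient topology, continuity at a seam amounts to checking that the one-sided limits of $\varphi$ there represent the same point of $X$.

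The horizontal seams are routine: as $y$ passes an integer, the reduction of $y$ approaches the two ends of the second factor of $X$, which are glued by $(\xi,0,\zeta)\sim(\xi,1/d,\zeta)$, so $\varphi$ matches up across $y\in\zahl$. The vertical seams are the heart of the matter and the step I expect to be the main obstacle, since this is where the combinatorics of $\Lambda_F$ must be reconciled with the gluings defining $X$. Writing $\eta=y\bmod1$, the left-hand limit at $x=x_{j+1}$ is $(x_j^*,\eta,x_{j+1}-x_j)$ while the value there is $(x_{j+1}^*,\eta,0)$, and I would show these coincide in $X$ by substituting the Lemma's first-return data. If $x_j^*\in[0,\cos\theta)$ the left limit is $(x_j^*,\eta,\cos\theta)$, which the relation $(\xi,\eta,\cos\theta)\sim(\xi+\sin\theta,\eta,0)$ sends to $(x_j^*+\sin\theta,\eta,0)=(x_{j+1}^*,\eta,0)$; if $x_j^*\in[\cos\theta,\cos\theta+\sin\theta)$ the left limit is $(x_j^*,\eta,\sin\theta)$, which the relation $(\xi,\eta,\sin\theta)\sim(\xi-\cos\theta,\eta,0)$ sends to $(x_j^*-\cos\theta,\eta,0)=(x_{j+1}^*,\eta,0)$. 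In either case the two representatives agree, so $\varphi$ is continuous. The one genuinely non-formal point is precisely this matching: the identifications that define $X$ are engineered to encode the first-return recurrence of $\Lambda_F$, which is exactly why the verification collapses to the two cases of the Lemma.
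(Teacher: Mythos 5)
Your proof is correct and matches the intended argument: the paper's own proof of this lemma is literally ``The proof is trivial,'' and your verification --- direct evaluation on $\Lambda_F\times\zahl$ for the set identity, and continuity by checking that the one-sided limits at the vertical seams are matched by the identifications of $X$ via the first-return lemma, with the horizontal seams handled by the gluing $(\xi,0,\zeta)\sim(\xi,\frac{1}{d},\zeta)$ --- is exactly the verification the paper leaves to the reader. One caveat, inherited from the paper rather than introduced by you: for $d>1$ the formula $y\bmod 1$ in the definition of $\varphi$ must be read as $y\bmod \frac{1}{d}$ for $\varphi$ to take values in $X$ at all (the later lemma, where the second coordinate of $\varphi(\ell_\alpha)\cap M$ is reduced $\bmod\ \frac{1}{d}$, confirms this reading), so the horizontal seams occur at $y\in\frac{1}{d}\zahl$ rather than only at $y\in\zahl$; with that reading your gluing argument, including the observation that $k\bmod\frac{1}{d}=0$ for $k\in\zahl$, goes through verbatim.
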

\begin{proof}
The proof is trivial.
\end{proof}

Let 
\[ M = [0,\cos\theta+\sin\theta) \times [0,\frac{1}{d}]
\]
be the base space of $X$, where we identify $(\xi,0) \sim (\xi,1/d)$.
$M$ is identified with $M\times\set{0} \subset X$.
We consider the `Poincar\'e map' in the trajectory 
$\varphi(\ell_\alpha)$ as follows.

\begin{lemma}
$\varphi(\ell_\alpha) \cap M$ is a dense subset of the `line'
\[
 L_\alpha := \set{(\xi,\alpha - \frac{\lambda^*}{d} \xi \bmod \frac{1}{d})
  :\ 0 \le \xi <\cos\theta+\sin\theta}.
\]
\end{lemma}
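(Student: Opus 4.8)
The plan is to give an explicit description of $\varphi(\ell_\alpha)\cap M$ and then reduce density on $L_\alpha$ to density of the star coordinates $\set{x_j^*}$. First I would locate the section points. Since $M$ is identified with $M\times\set{0}\subset X$, a point of the trajectory lies in $M$ exactly when its third coordinate $\zeta$ vanishes; by the definition of $\varphi$ (and the gluings that send the ceilings $\zeta=\cos\theta$ and $\zeta=\sin\theta$ back to $\zeta=0$) this happens precisely at the parameter values $t=x_j$, $j\in\zahl$. Evaluating $\ell_\alpha$ there gives the point $(x_j,\ \alpha+\tfrac{\lambda}{d}x_j)$, so
\[
 \varphi(\ell_\alpha)\cap M
  =\set{\bigl(x_j^*,\ (\alpha+\tfrac{\lambda}{d}x_j)\bmod\tfrac1d\bigr):\ j\in\zahl},
\]
the middle coordinate being read in the circle $\real/\tfrac1d\zahl$ of the base.

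Next I would verify that every such point already sits on $L_\alpha$, i.e.\ that its $\eta$-coordinate equals $\alpha-\tfrac{\lambda^*}{d}x_j^*$ modulo $1/d$. The key is the identity
\[
 x_j\lambda+x_j^*\lambda^* = a m_j+b n_j\in\zahl,
\]
obtained by expanding $\lambda=a\cos\theta-b\sin\theta$, $\lambda^*=a\sin\theta+b\cos\theta$, $x_j=m_j\cos\theta-n_j\sin\theta$, $x_j^*=m_j\sin\theta+n_j\cos\theta$ and using $\cos^2\theta+\sin^2\theta=1$, the cross terms cancelling. Dividing by $d$ gives $\tfrac{\lambda}{d}x_j=\tfrac{am_j+bn_j}{d}-\tfrac{\lambda^*}{d}x_j^*$; as $\tfrac{am_j+bn_j}{d}\in\tfrac1d\zahl$, this term vanishes in the circle and we get $\alpha+\tfrac{\lambda}{d}x_j\equiv\alpha-\tfrac{\lambda^*}{d}x_j^*\pmod{1/d}$. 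Hence each section point lies on $L_\alpha$, with its position along $L_\alpha$ recorded by $\xi=x_j^*$.

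It then remains to show that $\set{x_j^*:j\in\zahl}$ is dense in $[0,\cos\theta+\sin\theta)$. By the description of $\Lambda_F$ this set is exactly $\set{m\sin\theta+n\cos\theta:m,n\in\zahl}\cap[0,\cos\theta+\sin\theta)$. Writing $m\sin\theta+n\cos\theta=\cos\theta\,(m\tan\theta+n)$ and using $\tan\theta\in\real\setminus\rat$, Kronecker's theorem makes $\zahl\tan\theta+\zahl$ dense in $\real$, so $\set{m\sin\theta+n\cos\theta}$ is dense in $\real$ and its trace on the window is dense in $[0,\cos\theta+\sin\theta)$. Since the parametrization $\xi\mapsto(\xi,\ (\alpha-\tfrac{\lambda^*}{d}\xi)\bmod\tfrac1d)$ of $L_\alpha$ is continuous, density of the $\xi$-values forces $\varphi(\ell_\alpha)\cap M$ to be dense in $L_\alpha$.

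I expect the only genuine subtlety to be the bookkeeping of the two moduli: $\varphi$ returns its middle coordinate as $y\bmod1$, whereas the base $M$ carries the smaller circle $\real/\tfrac1d\zahl$, so one must reduce once more, using $(y\bmod1)\bmod\tfrac1d=y\bmod\tfrac1d$ (valid since $1\in\tfrac1d\zahl$). This is exactly the reduction that lets the integer combination $am_j+bn_j$ drop out and pins the orbit onto the slope $-\lambda^*/d$ line $L_\alpha$. The identity and the Kronecker density step are both routine.
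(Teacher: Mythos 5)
Your proof is correct, but the central step is carried out by a genuinely different mechanism than the paper's. The paper stays inside the dynamical framework that Section 3 is meant to showcase: it treats $\varphi(\ell_\alpha)\cap M$ as an orbit of the Poincar\'e return map, computes the increment from $(x_j^*,\eta_j)$ to $(x_{j+1}^*,\eta_{j+1})$ using the spacing lemma ($x_{j+1}-x_j\in\set{\cos\theta,\sin\theta}$ according to the branch), and then checks that the two branches produce the \emph{same} slope via the identities $\frac{1}{\sin\theta}(\frac{\lambda\cos\theta}{d}-\frac{a}{d})=\frac{1}{-\cos\theta}(\frac{\lambda\sin\theta}{d}+\frac{b}{d})=-\frac{\lambda^*}{d}$; the orbit is then pinned to $L_\alpha$ by propagation from the anchor point $(0,\alpha\bmod\frac1d)$. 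You instead bypass the return map entirely and pin \emph{each} section point to $L_\alpha$ at once through the closed-form identity $\lambda x_j+\lambda^* x_j^*=am_j+bn_j\in\zahl$, which kills the integer part modulo $\frac1d$. This is in fact the same algebraic identity that drives the paper's proof of Theorem~2 in Section~2 (where $k-\lambda x/d$ is expanded in exactly this way), so your argument effectively re-imports the Section~2 computation rather than giving the independent dynamical argument the paper intends as an alternative proof; what it buys is brevity and a global statement with no induction along the orbit, while the paper's version buys the interpretation of the section dynamics as an interval exchange map, which is the conceptual point of the section. Your remaining steps match the paper's: the identification of the section points $\set{(x_j^*,\alpha+\frac{\lambda x_j}{d}\bmod\frac1d)}$ is the same, and your density argument (density of $\zahl\tan\theta+\zahl$, hence of $\set{x_j^*}$ in $[0,\cos\theta+\sin\theta)$) is the same fact the paper invokes as density of an irrational rotation orbit. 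Your handling of the $y\bmod 1$ versus $y\bmod\frac1d$ bookkeeping correctly resolves what appears to be a typo in the paper's definition of $\varphi$.
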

\begin{proof}
First we have
\[ \varphi(\ell_\alpha) \cap M
 = \set{(x_j^*, \alpha + \frac{\lambda x_j}{d} \bmod \frac{1}{d})
  :\ j\in\zahl},
\]
and $(x_0^*, \alpha + \frac{\lambda x_0}{d} \bmod \frac{1}{d} )
 =(0,\alpha \bmod \frac{1}{d})$.
The mapping of $\varphi(\ell_\alpha) \cap M$, defined by
\begin{align*}
 & (x_{j+1}^*, \alpha + \frac{\lambda x_{j+1}}{d} \bmod \frac{1}{d}) \\
 & = \begin{cases}
  (x_j^*+\sin\theta,
  \alpha + \frac{\lambda}{d} (x_{j}+\cos\theta) \bmod \frac{1}{d})
    & \text{ if } 0 \le x_j^* < \cos\theta \\
  (x_j^*-\cos\theta, 
 \alpha + \frac{\lambda}{d} (x_{j}+\sin\theta) \bmod \frac{1}{d})
    & \text{ if } \cos\theta \le x_j^* < \cos\theta+\sin\theta,
 \end{cases}
\end{align*}
extends to a piecewise continuous map on $M$,
which could be called an `interval exchange' map.
Since
\[
  \frac{1}{\sin\theta}
  (\frac{\lambda \cos\theta}{d} - \frac{a}{d} )
  =  \frac{1}{-\cos\theta}
  (\frac{\lambda \sin\theta}{d} + \frac{b}{d} )
 = - \frac{\lambda^*}{d},
\]
the slope of $L_\alpha$ is well-defined.  The set $\set{x_j^* :\ j\in\zahl}$
is an orbit of an `irrational rotation' on the `circle'
$[0,\cos\theta+\sin\theta)$, so $\varphi(\ell_\alpha) \cap M$ is a
  dense subset of $L_\alpha$.
\end{proof}

\begin{lemma}
\label{4}
The line $L_\alpha$ passes through the point $(x_j^*,0)$ if and only if
$\alpha \equiv \frac{\lambda^* x_j^*}{d} \bmod \frac{1}{d}$.
\end{lemma}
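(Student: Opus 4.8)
The plan is to read off the claim directly from the definition of the `line' $L_\alpha$, whose first coordinate $\xi$ serves as an injective parameter. Consequently, among all points of $L_\alpha$ there is at most one whose first coordinate equals $x_j^*$, and such a point exists precisely when $x_j^*$ lies in the admissible parameter range. So the whole argument reduces to substituting $\xi = x_j^*$ and reading off the second coordinate.

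First I would check that $\xi = x_j^*$ is admissible. By the definition of $\Lambda_F$, the internal coordinate $x_j^* = m_j\sin\theta + n_j\cos\theta$ lies in the window $I = [0,\epsilon) = [0,\cos\theta+\sin\theta)$, which is exactly the range of the parameter $\xi$ in this section. Hence $L_\alpha$ contains a unique point with first coordinate $x_j^*$, namely the point with second coordinate $\alpha - \frac{\lambda^*}{d}x_j^* \bmod \frac{1}{d}$.

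Then I would impose that this point coincide with $(x_j^*,0)$. Recalling that the second coordinate in $M$ is taken modulo $1/d$ (because of the identification $(\xi,0)\sim(\xi,1/d)$), the condition that the second coordinate vanish reads $\alpha - \frac{\lambda^*}{d}x_j^* \equiv 0 \bmod \frac{1}{d}$, which rearranges to the claimed $\alpha \equiv \frac{\lambda^* x_j^*}{d} \bmod \frac{1}{d}$. The statement is essentially a matter of unwinding the definition of $L_\alpha$; there is no real obstacle, and the only points that require a moment's attention are verifying that $x_j^*$ falls within the parameter range $[0,\cos\theta+\sin\theta)$, which is precisely the window condition, and keeping track of the $\bmod \frac{1}{d}$ identification on the second coordinate.
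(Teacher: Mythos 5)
Your proposal is correct and matches what the paper intends: the paper dismisses this lemma with ``The proof is trivial,'' and your argument is exactly that triviality spelled out --- substituting $\xi = x_j^*$ into the defining formula of $L_\alpha$ and requiring the second coordinate to vanish modulo $\frac{1}{d}$. Your additional check that $x_j^*$ lies in the parameter range $[0,\cos\theta+\sin\theta)$, via the window condition with $\epsilon=\cos\theta+\sin\theta$, is a sensible detail the paper leaves implicit.
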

\begin{proof}
The proof is trivial.
\end{proof}

Theorem~\ref{1} for the case $\epsilon = \cos\theta + \sin\theta$
immediately follows from Lemma~\ref{4}.

\section{Dense lines in the plane}

In this section we prove the second part of Theorem~\ref{1}.
\begin{theorem}
Let $w=(1,s)$.  If $s \not\in \rat \cos\theta + \rat \sin\theta$, then
$\Lambda+w\real$ is a dense subset of $\real^2$.
\end{theorem}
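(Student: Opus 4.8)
The plan is to reduce the two-dimensional density statement to a one-dimensional one about a cross-section, and then invoke Kronecker's theorem. First I would record that $\Lambda+w\real$ is a union of lines all of the same slope $s$, and that such a union is dense in $\real^2$ if and only if the set of their $y$-intercepts is dense in $\real$: two parallel lines of slope $s$ with intercepts $c_1,c_2$ lie everywhere at perpendicular distance $|c_1-c_2|/\sqrt{1+s^2}$, so closeness of intercepts is closeness of lines. The line through $(x,k)\in\Lambda$ meets $\set{0}\times\real$ at height $k-sx$, hence this intercept set is exactly $S=\set{k-sx :\ x\in\Lambda_F,\ k\in\zahl}$, and its density in $\real$ is equivalent to density of $\set{sx\bmod 1 :\ x\in\Lambda_F}$ in $\real/\zahl$.

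Next I would lift the problem onto the cylinder $(\real/\zahl)\times\real$. Writing $x=m\cos\theta-n\sin\theta$ and $x^*=m\sin\theta+n\cos\theta$, define the homomorphism $\phi:\zahl^2\to(\real/\zahl)\times\real$ by $\phi(m,n)=(sx\bmod 1,\ x^*)$ and let $\Gamma=\phi(\zahl^2)$. The window condition $x^*\in[0,\epsilon)$ selects precisely the generators of $\Lambda_F$, so $\set{sx\bmod 1 :\ x\in\Lambda_F}$ is the projection to $\real/\zahl$ of $\Gamma\cap\bigl((\real/\zahl)\times[0,\epsilon)\bigr)$. I would then prove the sharper statement that $\Gamma$ is dense in the \emph{whole} cylinder $(\real/\zahl)\times\real$: since $\epsilon>0$, the strip $(\real/\zahl)\times(0,\epsilon)$ is a nonempty open set, and density of $\Gamma$ there forces its projection to $\real/\zahl$ to be dense, which is exactly what is required. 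This is the step where the precise length of the window $I$ becomes irrelevant; only $\epsilon>0$ is used, which is why $\Lambda$ carries no nontrivial one-dimensional structure in this regime.

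Finally, density of the finitely generated subgroup $\Gamma$ is decided by Pontryagin duality (Kronecker's theorem): $\closure(\Gamma)=(\real/\zahl)\times\real$ if and only if no nontrivial character annihilates $\Gamma$. A character of the cylinder has the form $(u,v)\mapsto \rme^{2\pi\rmi(pu+\xi v)}$ with $p\in\zahl$ and $\xi\in\real$, and by $\zahl$-linearity it kills $\Gamma$ if and only if, evaluated on the two generators $\phi(1,0)$ and $\phi(0,1)$, one has $ps\cos\theta+\xi\sin\theta\in\zahl$ and $-ps\sin\theta+\xi\cos\theta\in\zahl$. Solving this linear system gives $ps=a'\cos\theta-b'\sin\theta$ and $\xi=a'\sin\theta+b'\cos\theta$ for integers $a',b'$. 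If $p\neq0$ this exhibits $s=(a'/p)\cos\theta-(b'/p)\sin\theta\in\rat\cos\theta+\rat\sin\theta$; if $p=0$ the relation $a'\cos\theta=b'\sin\theta$ together with the irrationality of $\tan\theta$ forces $a'=b'=0$, whence $\xi=0$ and the character is trivial. Therefore, under the hypothesis $s\notin\rat\cos\theta+\rat\sin\theta$, only the trivial character annihilates $\Gamma$, so $\Gamma$ is dense, and the reductions above give that $\Lambda+w\real$ is dense in $\real^2$.

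The step I expect to be the main obstacle is not the duality computation, which is essentially mechanical, but the reduction itself: namely, formulating the right ambient group $(\real/\zahl)\times\real$ and verifying that density of $\Gamma$ in the full cylinder is both attainable and sufficient, so that the bounded window $[0,\epsilon)$ drops out harmlessly once one works in the open strip. Getting this bookkeeping exactly right — and confirming that the projection of a set dense in the cylinder is dense after the window cut — is where care is needed.
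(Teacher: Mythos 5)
Your proof is correct, but it takes a genuinely different route from the paper. The paper works in $\real^3$: it writes $D=AD_0$ with $D_0=\zahl^3$, recenters the window so that $H\times I$ is a neighborhood of the physical plane $H$, slants the picture back by $A^{-1}$, and applies the \emph{flow} form of Kronecker's theorem to the direction vector $A^{-1}\tilde w=(\cos\theta,s,-\sin\theta)$ --- the hypothesis $s\notin\rat\cos\theta+\rat\sin\theta$ together with the irrationality of $\tan\theta$ is exactly what rules out an integral linear relation among these three coordinates, so the line is dense modulo $D_0$ and hence passes arbitrarily close to the cut lattice $D_0\cap A^{-1}(H\times I)$. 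You instead reduce to a one-dimensional statement about the $y$-intercept set $S$ (the same cross-section device the paper itself uses in Theorem~\ref{2} for the rational-slope case), lift to the cylinder $(\real/\zahl)\times\real$, and apply the \emph{subgroup/duality} form of Kronecker: the rank-two subgroup $\Gamma$ generated by $\phi(1,0)=(s\cos\theta\bmod 1,\sin\theta)$ and $\phi(0,1)=(-s\sin\theta\bmod 1,\cos\theta)$ is dense iff no nontrivial character kills it, and your character computation correctly shows an annihilating character with $p\neq0$ would force $s\in\rat\cos\theta+\rat\sin\theta$, while $p=0$ forces triviality via irrationality of $\tan\theta$. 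Both arguments are sound; yours buys a unified treatment of the two halves of Theorem~\ref{1} by the same cross-section strategy, makes completely transparent where the hypothesis enters and why the window length $\epsilon>0$ is irrelevant, and in fact proves the slightly stronger statement that $S$ itself is dense in $\real$; the paper's buys brevity (given the classical Kronecker statement), a more geometric picture of lines approaching the model set near every point, and an argument that generalizes verbatim to a physical space $\real^k$ with $k>2$. One small point of care in your write-up: the open strip trick is the right way to handle the boundary of the window, and your observation that projecting a set dense in $(\real/\zahl)\times(0,\epsilon)$ yields a dense subset of $\real/\zahl$ is exactly the needed bookkeeping.
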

\begin{proof}
The lattice $D\subset\real^3$ is written by 
$D=AD_0$,
where 
$D_0=\zahl^3$ and $A=\begin{pmatrix}
 \cos\theta & 0 & -\sin\theta \\
 0 & 1 & 0 \\
 \sin\theta & 0 & \cos\theta
 \end{pmatrix}$.
Let $H=\real^2\times\set{0}$.  By giving a parallel translation to the
lattice $D_0$, we may assume that the window interval is
$I=[-\epsilon_1,\epsilon_2)$, $\epsilon_1,\epsilon_2>0$,
  $\epsilon_1+\epsilon_2=\epsilon$, so that $H\times I$ is a
  neighborhood of $H$.

We identify $\Lambda$ with $\tilde\Lambda=\Lambda\times\set{0} \subset
\real^3$, and rotate it back to $A^{-1}\tilde\Lambda$ in the slanted
plane $A^{-1}H$.  The slope vector $\tilde{w}=(1,s,0)$ is slanted to
$A^{-1}\tilde{w} = (\cos\theta,s,-\sin\theta)$.

Here we recall Kronecker's Theorem.

\smallskip
\noindent{\bf Kronecker's Theorem.}\ 
Let $\theta_1,\theta_2,\theta_3 \in \real$.
Suppose that no linear relation
$$a_1 \theta_1 + a_2 \theta_2 + a_3\theta_3=0$$
with integral coefficients, not all zero, holds between them.
Then
\[ \set{(t\theta_1 \bmod 1, t\theta_2 \bmod 1, t\theta_3 \bmod 1)
 :\ t\in\real}
\]
is a dense subset of the unit cube $[0,1)^3$.
\smallskip

Kronecker's Theorem implies that the line $A^{-1}\tilde\ell :=
\set{A^{-1}(\tilde{p}+t\tilde{w}) :\ t\in\real}$, for any $\tilde{p}
\in H$, comes arbitrarily close to the cutted lattice $D_0 \cap
A^{-1}(H\times I)$.  Hence $A^{-1}\tilde\ell$ comes arbitrarily close
to the slanted model set $A^{-1}\tilde\Lambda$.  This completes the
proof.
\end{proof}

Figure 2 draws $241$ lines in the same range as in Figure 1, of the
slope $\sqrt{2}$.  This is a finite approximation of the dense set in
the plane.
\begin{figure}
\includegraphics{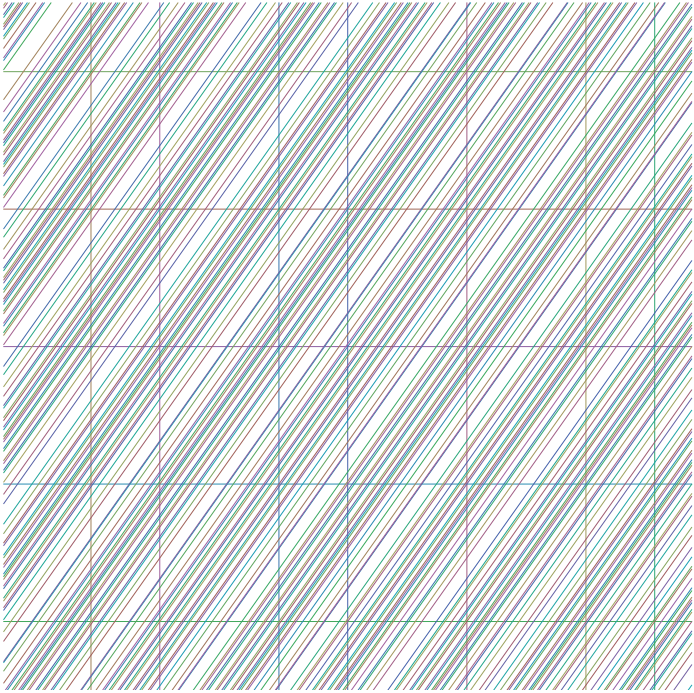}
\caption{Dense lines in the plane.  The slope is $\sqrt{2}$.}
\label{fig:2}
\end{figure}

\end{document}